\newtheorem{theorem}{Theorem}[section]
\newtheorem{definition}[theorem]{Definition}
\newtheorem{defs}[theorem]{Definitions}
\newtheorem{lemma}[theorem]{Lemma}
\newtheorem{proposition}[theorem]{Proposition}
\newtheorem{remark}[theorem]{Remark}
\newcommand{\nc}{\newcommand}
\nc{\rnc}{\renewcommand}
\nc{\ip}{idempotent}
\nc{\Ip}{Idempotent}
\nc{\sgp}{semigroup}
\nc{\ia}{inverse algebra}
\nc{\is}{inverse semigroup}
\nc{\alg}{algebra}
\nc{\sla}{semilattice}
\nc{\ioi}{if and only if~}
\nc{\rep}{representation}
\nc{\eff}{effective }
\nc{\tr}{transitive }
\nc{\id}{\rm{id}}
\title[Representations of inverse semigroups] {Representations of inverse semigroups in complete atomistic inverse meet-semigroups\\}
\author{D. G. FitzGerald}
\address{ School of Mathematics and Physics, University of Tasmania, Private Bag 37, Hobart 7001, Australia }
\date{\today}
\email{D.FitzGerald@utas.edu.au}
\keywords{Inverse semigroups}
\subjclass[2010]{ 20M18}
\begin{document}
\dedicatory{For Vivienne}
\maketitle

\begin{abstract}
 
As an appropriate generalisation of the features of the
classical (Schein) theory of representations of inverse semigroups in $\mathscr{I}_{X}$, a theory of representations of inverse semigroups by homomorphisms into complete atomistic inverse $\wedge$-semigroups is developed.  This class of inverse $\wedge$-semigroups, otherwise known as inverse algebras,  includes partial automorphism monoids of entities such as graphs, vector spaces and modules.  A workable theory of decompositions is reached; however complete distributivity is required for results approaching those of the classical case.
\end{abstract}

\section{Inverse semigroups and   representations}
It is important to study mathematical structures as represented by objects of a suitably elaborate kind: it helps us understand and classify them, as witness the importance of linear groups and groups of automorphisms of graphs.  Inverse semigroups generalise both groups and semilattices, and describe partial symmetries just as groups do for total symmetries; they also arise in representation theory of some operator algebras.  Yet our knowledge of inverse semigroup representations is mostly confined to linear representations as studied by Munn, Ponizovski\u{\i} and others (in which the representing object---the codomain of the representation---is merely a regular rather than inverse semigroup) and to partial permutation representations. 
The latter, the theory of representations of inverse semigroups by  injective partial mappings of a set, is well-developed, beginnning with the Wagner-Preston theorem, and fully developed in the work of Boris Schein.  Namely, any effective representation in the symmetric inverse monoid
$\mathscr{I}_{X}$ decomposes to a `sum' of transitive ones, and every
transitive one has an `internal' description in terms of appropriately defined
cosets of closed inverse subsemigroups.  We shall refer to this as the \emph{classical} treatment.  Section IV.4 of Petrich's book \cite{Pe} has the most helpful exposition, and this paper takes it as a model. For a recent work which also streamlines and modernises the original approaches to the internal descriptions, see \cite{LaMaSt}.   

The intent of the present paper is to explore an approach to the decomposition question which will work for other kinds of partial automorphism monoids.  
Such generalisation is no mere `abstractification' of the classical theory, as it is needed to guide the development of more diverse representations in partial automorphism \alg s of entities such as  graphs, vector spaces, and modules.
So we wish to find
appropriate generalizations of the features of the classical theory, and apply
them where possible in other settings; in particular, we need generalisations of the concepts of effectiveness and transitivity.  

Our point of departure is that many of
these partial automorphism monoids (including prototypically $\mathscr{I}_{X}$
itself) are significantly richer in structure as a result of underlying categorical
properties---they are actually \emph{inverse $\wedge$-semigroups }(see Section 2). Thus it is the contention of this paper that the representation question requires taking account of the properties of inverse $\wedge$-semigroups, and identifying those helpful in the decomposition of representations.

Here is another concrete justification for this endeavour.  Consider the familiar Wagner-Preston representation giving, for any inverse semigroup $S$, an injective morphism $\alpha\colon S\rightarrow \mathscr{I}_{X}$ (where $X$ may be taken to be the carrier set of $S$).  
Recall too that $\mathscr{I}_{X}$ has a counterpart, denoted $\mathscr{I}^{\ast}_{X}$, whose elements are bijections between quotient sets of $X$, rather than between subsets.  (This is described in more detail in Section 4.2 of \cite{La}, and there is a small concrete example in the Appendix below.)
It is proved in \cite{FiLe} that $\mathscr{I}^{\ast}_{X}$ is an inverse $\wedge$-semigroup and that there is an injective morphism $\beta\colon S\rightarrow \mathscr{I}^{\ast}_{X}$, so that every inverse $S$ may be embedded in some $\mathscr{I}^{\ast}_{X}$.  Now let the \emph{degree} of $S$, $\text{deg}S$, be defined as the minimum cardinal $\vert X \vert$ such that 
$S$ embeds in $\mathscr{I}_{X}$; and similarly let $\text{deg}^{\ast}S$ be the minimum cardinal $\vert X \vert$ such that 
$S$ embeds in $\mathscr{I}^{\ast}_{X}$.  Since $\mathscr{I}_{X}$ embeds in $\mathscr{I}^{\ast}_{X\cup 0}$ for $0\notin X$,
(shown in \cite{FiLe}), $\text{deg}^{\ast}S\leq \text{deg}S+1$; and since $\mathscr{I}^{\ast}_{X}$ embeds in $\mathscr{I}_{Y}$ where $Y=2^{X}\smallsetminus\{\varnothing, X\}$, we have $\text{deg}S\leq 2^{\text{deg}^{\ast}S}-2$ (this is shown in the Appendix).  Combining these bounds, we see that 
$\log_{2}(\text{deg}S + 2)\leq \text{deg}^{\ast}S\leq \text{deg}S+1$.  Thus  there is the potential for representation of $S$ in 
$\mathscr{I}^{\ast}_{X}$ to be more efficient than in $\mathscr{I}_{X}$ (in the sense of using a smaller set), at least for inverse semigroups with relatively many  idempotent atoms.  Yet we know very little about representations of $S$ in $\mathscr{I}^{\ast}_{X}$!

Since we shall only deal with inverse semigroups we shall abbreviate terminology, and by 
``subsemigroup'' we shall always mean 
``inverse subsemigroup''.  The paper is organised as follows.
We begin by rehearsing some terminology and foundational results, then  consider ways in which representations may be decomposed into simpler kinds. 
 The concepts of 
\eff and \tr representations are described in terms of the structure of a subsemigroup and how it acts on idempotent atoms.  
Four theorems of increasing particularity, depending on extra properties of the ambient inverse $\wedge$-semigroup, provide information on decomposition of a representation. 
 An Appendix expands on some claims and gives some simple  examples which illustrate the choice of definitions.

\section{Inverse $\wedge$-semigroups and their order properties}

An \emph{inverse $\wedge$-semigroup} $A=(A,\cdot,^{-1},\wedge)$ is an inverse semigroup
$(A,\cdot,^{-1})$ in which the natural ordering is a semilattice order, that
is, for all pairs $a,b\in A$ there is a greatest  $x \in A$ such that $x\leq a,b$; 
this greatest such is denoted $a \wedge b$.  Also known as \emph{inverse algebras} as in \cite{La12}, 
inverse $\wedge$-semigroups were introduced and elucidated by Leech in \cite{LePLMS} and
\cite{LeScottish}, and the reader is referred to those papers for a full
discussion and examples.  Inverse $\wedge$-semigroups constitute a variety, so the class is closed under the taking of products and subobjects (subsets closed under $\cdot, ^{-1}$ and  $\wedge$).  In particular, the local monoids of $A$ (subsemigroups of the form $eAe$ for some $e = e^{2}$) are themselves inverse $\wedge$-semigroups,  called \emph{local  algebras} for short.  

As alluded to in the introduction,   $\mathscr{I}_{X}$, 
 $\mathscr{I}_{X}^{\ast}$, and the inverse monoid 
$\mathscr{P\!A}\!\left(V\right) $
of partial automorphisms of a vector space $V$ are examples of inverse $\wedge$-semigroups which have significant extra properties, and are important for representations.   Moreover, their local algebras are (isomorphic with) inverse $\wedge$-semigroups of the same kind, to wit partial permutation, block permutation or partial automorphism monoids respectively.   

As usual, $E(A)$ denotes the set of idempotents in $A$. We shall be concerned with stronger order properties of inverse $\wedge$-semigroups, which are often linked with properties of $E(A)$. In the remainder of this section, we give the usual definitions for posets or semilattices in general, but apply them to inverse $\wedge$-semigroups.
Throughout, $S$
is  a subsemigroup (inverse, remember) of $A$, a relationship we notate by $S\leq A$. 
\subsection*{Order properties} 
As is the case for any semilattice, we say $A$ is a \emph{complete} inverse $\wedge$-semigroup if each of its non-empty subsets $X$ has an infimum $\inf X$ in the
natural ordering. In particular, such an $A$ possesses a bottom element and multiplicative zero $0=\inf A$. If $X$ is also bounded above, the supremum $\sup X$ exists.  In particular, if $A$ is \emph{unital}, i.e., has an identity element $1$,  $E(A)$ is  a lattice.  We write $x\vee y$ for $\sup\{x,y\}$ (when it exists).  

As usual, $A$ is \emph{\ distributive} if $x(y\vee z)=xy\vee xz$ for all
$x,y,z\in A$ with $y,z$ bounded above, and
\emph{completely\ distributive} if $x(\sup Y)=\sup\{xy\colon y\in Y\}$
for all $x\in A$ and all $Y\subseteq A$ such that $Y$ has an upper bound in
$A$. 
The following result, known as \emph{Ehresmann's lemma} [Schein; \cite{LePLMS}, section 1.28], is so central to this work  that it can hardly be a fault to include the short proof.

\begin{proposition} \label{ehr}
Let $A$ be a complete inverse $\wedge$-semigroup.   If $X\subseteq A$ and $X$ is bounded above by
$u\in A$, then $X$ has a least upper bound $\sup X$ given by  \[
\sup X=\left(  \sup\{xx^{-1}\colon x\in X\}\right)  u=u\left(\sup\{x^{-1}x\colon x\in X\} \right) .\] 
\end{proposition}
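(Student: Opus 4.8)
The plan is to put $E=\sup\{xx^{-1}\colon x\in X\}$ and prove the first equality $\sup X=Eu$ outright, then obtain $\sup X=uF$ (with $F=\sup\{x^{-1}x\colon x\in X\}$) by symmetry. First I would justify that $E$ exists: from $x\le u$ one has $xx^{-1}\le uu^{-1}$ for each $x\in X$, so the idempotents $xx^{-1}$ are bounded above by $uu^{-1}$ and completeness supplies $E$, with $E\le uu^{-1}$. Using $E\le uu^{-1}$ and $E^{-1}=E$, a one-line computation gives the identity $(Eu)(Eu)^{-1}=Euu^{-1}E=E$, which I reuse below. That $Eu$ is an upper bound of $X$ is then immediate: for $x\in X$ the relations $x\le u$ and $xx^{-1}\le E$ give $x=xx^{-1}u=xx^{-1}Eu=xx^{-1}(Eu)$, and since $xx^{-1}$ is idempotent this reads $x\le Eu$.

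The crux is leastness. Were $A$ completely distributive one could finish in a line, $Eu=(\sup\{xx^{-1}\})u=\sup\{xx^{-1}u\}=\sup\{x\colon x\in X\}$ (because $xx^{-1}u=x$), but the proposition assumes only completeness, so this route is closed; I expect circumventing it to be the main obstacle. Instead I would exploit the meet of the inverse algebra. Let $v$ be any upper bound of $X$ and set $w=Eu\wedge v$. Every $x\in X$ lies below both $Eu$ and $v$, hence below $w$, so $w$ is itself an upper bound of $X$ and therefore $E\le ww^{-1}$. Conversely $w\le Eu$ forces $ww^{-1}\le(Eu)(Eu)^{-1}=E$, whence $ww^{-1}=E$; substituting into $w=ww^{-1}(Eu)$ gives $w=E(Eu)=Eu$. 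Thus $Eu=Eu\wedge v\le v$ for every upper bound $v$, which is precisely $\sup X=Eu$.

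Finally, to reach $\sup X=uF$ I would apply the first formula to $X^{-1}=\{x^{-1}\colon x\in X\}$, which is bounded above by $u^{-1}$. Inversion is an order automorphism of $(A,\le)$ (since $a\le b\iff a^{-1}\le b^{-1}$) and so preserves suprema, while $(x^{-1})(x^{-1})^{-1}=x^{-1}x$ shows that the relevant idempotent supremum for $X^{-1}$ is $F$. The first part therefore gives $\sup(X^{-1})=Fu^{-1}$, and inverting back produces $\sup X=(Fu^{-1})^{-1}=uF$, as required. The only points needing care throughout are the standard order identities for inverse semigroups (namely $x\le y\iff x=xx^{-1}y$, together with its consequences $x\le y\Rightarrow xx^{-1}\le yy^{-1}$ and $e\le f\iff e=ef$ for idempotents), which I would invoke without further comment.
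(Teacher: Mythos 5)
Your proof is correct, and its first half coincides with the paper's: existence of $E=\sup\{xx^{-1}\colon x\in X\}$ via the bound $uu^{-1}$, then the verification $x=xx^{-1}u\le Eu$ showing $Eu$ is an upper bound. Where you genuinely diverge is the leastness step, which you rightly identify as the crux. The paper argues directly: if $b$ is any upper bound of $X$, then multiplying $x\le b$ by $x^{-1}\le u^{-1}$ gives $xx^{-1}\le bu^{-1}$, hence $E\le bu^{-1}$ by leastness of the supremum, and so $Eu\le bu^{-1}u\le b$ --- three lines using only multiplication, inversion, and the defining property of $\sup$. You instead invoke the meet: with $v$ an arbitrary upper bound you form $w=Eu\wedge v$, observe that $w$ is again an upper bound of $X$, squeeze $E\le ww^{-1}\le (Eu)(Eu)^{-1}=E$ to get $ww^{-1}=E$, and conclude $w=ww^{-1}(Eu)=Eu$, i.e.\ $Eu\le v$. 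Both arguments are valid. The trade-off: your route needs the auxiliary identity $(Eu)(Eu)^{-1}=E$ (which rests on $E$ being idempotent, justified since $E\le uu^{-1}$ and any element beneath an idempotent is idempotent) and it leans on the $\wedge$ operation, whereas the paper's route never touches $\wedge$, so it establishes the lemma in any inverse semigroup in which the relevant suprema exist, not only in inverse algebras --- a mild but real gain in generality and economy. Your closing duality step (apply the first formula to $X^{-1}$, use that inversion is an order automorphism, and invert back) is exactly what the paper compresses into ``the second equation is dual,'' and your side remark that complete distributivity would trivialise leastness, but is not assumed, correctly pinpoints why the extra work is necessary.
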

 
\begin{proof}
First, note that $\sup\{xx^{-1}\colon x\in X\}$ exists since $xx^{-1}\leq uu^{-1}$ for all $x\in X$.  Now $x\in X$ implies 
$x = xx^{-1}u \leq  (  \sup\{xx^{-1}\colon x\in X\}) u$, so the latter is an upper bound for $X$. 
But if $b$ is any upper bound, there also hold  $xx^{-1}\leq bu^{-1}$ and so $\sup \{xx^{-1}\colon x\in X\}\leq bu^{-1}$.  Then $(\sup\{xx^{-1}\colon x\in X\}) u \leq bu^{-1}u\leq b$, and  $(\sup\{xx^{-1}\colon x\in X\}) u $ is the least upper bound.  The second equation is dual.
\end{proof}

Recall that a lattice is \emph{atomistic} if every element is a join of atoms; for complete boolean lattices, this is equivalent to being atomic (i.e., every element is above an atom).  
Some background facts concerning atoms will be required.  Let $P = P(A)$ represent the set  
of atoms of $E(A)$.  

\begin{lemma} \label{tfae} 
For $p,q\in P$ and $s\in S$, the following are equivalent:
\begin{enumerate}
 \item $q=s^{-1}ps;$
 \item $ps=sq\neq 0;$
 \item $psq=ps=sq\neq 0;$
 \item $psq\neq 0.$
\end{enumerate}
\end{lemma}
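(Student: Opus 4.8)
The plan is to prove the four conditions equivalent by establishing the cycle $(1)\Rightarrow(2)\Rightarrow(3)\Rightarrow(4)\Rightarrow(1)$, since three of the four links are short computations and the substance of the lemma is concentrated entirely in the return implication $(4)\Rightarrow(1)$. Throughout I would lean on the two workhorse facts of inverse semigroup arithmetic: idempotents commute, and $s^{-1}es$ is idempotent whenever $e$ is.

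First I would dispose of the easy links. For $(1)\Rightarrow(2)$, substitute $q=s^{-1}ps$ into $sq$ and use that the idempotents $ss^{-1}$ and $p$ commute to get $sq = ss^{-1}ps = p\,ss^{-1}s = ps$; non-vanishing follows because $q\in P$ is by definition nonzero, while $ps=0$ would force $q=s^{-1}ps=0$. For $(2)\Rightarrow(3)$, right-multiply the equality $ps=sq$ by the idempotent $q$ to obtain $psq = sq^{2} = sq = ps$. The step $(3)\Rightarrow(4)$ is immediate.

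The heart of the argument is $(4)\Rightarrow(1)$, and here primitivity must be used twice. Assuming $psq\neq 0$, I would first note that $ps\neq 0$ and that $pss^{-1}$ is a nonzero idempotent below $p$ (nonzero because $pss^{-1}s = ps$); primitivity of $p$ then gives $pss^{-1}=p$, that is, $p\leq ss^{-1}$. This unlocks the standard conjugation tool: the map $e\mapsto s^{-1}es$ is an order isomorphism of the interval $[0,ss^{-1}]$ of $E(A)$ onto $[0,s^{-1}s]$, with inverse $f\mapsto sfs^{-1}$, so it carries atoms to atoms. Since $p$ is an atom of $E(A)$ lying below $ss^{-1}$, its image $s^{-1}ps$ is an atom of $[0,s^{-1}s]$, and an atom of this interval is automatically an atom of all of $E(A)$; hence $s^{-1}ps\in P$.

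It then remains to identify $q$ with $s^{-1}ps$, which is the second use of primitivity. Computing $(psq)^{-1}(psq)=q\,s^{-1}ps\,q = q(s^{-1}ps)$, where the last equality uses commutativity of idempotents, shows that $q(s^{-1}ps)$ is a nonzero idempotent below $q$; primitivity of $q$ then forces $q(s^{-1}ps)=q$, i.e.\ $q\leq s^{-1}ps$. As $q$ and $s^{-1}ps$ are both atoms of $E(A)$ with $q\leq s^{-1}ps$, they must coincide, giving $q=s^{-1}ps$. I expect the main obstacle to be organising $(4)\Rightarrow(1)$ so that primitivity of $p$ is invoked (via $p\leq ss^{-1}$) \emph{before} the conjugation isomorphism is applied: without $p\leq ss^{-1}$ there is no guarantee that $s^{-1}ps$ is itself primitive, and the concluding comparison of two atoms would have no basis.
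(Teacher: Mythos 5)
Your proof is correct and follows essentially the same route as the paper: the cycle $(1)\Rightarrow(2)\Rightarrow(3)\Rightarrow(4)\Rightarrow(1)$, with the same short computations for the first three links and primitivity carrying the weight in the return implication. Your $(4)\Rightarrow(1)$ is a fully justified version of the paper's terse chain $psq\neq 0 \Rightarrow s^{-1}psq\neq 0 \Rightarrow s^{-1}psq=q \Rightarrow (1)$, supplying explicitly (via $p\leq ss^{-1}$ and the fact that conjugation preserves atoms) the step the paper leaves implicit, namely why $q\leq s^{-1}ps$ can be upgraded to equality.
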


\begin{proof}

(1) $\Rightarrow$ $sq=ss^{-1}ps=ps$ and $ps\neq0$ (else $q=0$) $\Rightarrow$
(2) $\Rightarrow$ $psq=ps\Rightarrow$ 
(3) $\Rightarrow$ (4)$\Rightarrow
s^{-1}psq\neq0\Rightarrow s^{-1}psq=q$ $\Rightarrow$(1). 
\end{proof}

We may simplify proceedings by dividing the problem: let the \rep~ $\rho\colon T\rightarrow A$  factor through the surjection $\psi\colon T\rightarrow S$ and the inclusion $S\hookrightarrow A$.  The structure of $\psi$ is known through the characterisation of congruences on inverse semigroups ({\it via} Preston's kernel-normal systems, or the kernel-and-trace of Scheiblich---see \cite{La} or \cite{Pe}). So we need consider only how a subsemigroup embeds in $A$.  
To avoid needless repetition, we make the following convention:

\emph{For the remainder of this paper, $A$ will denote a unital complete atomistic inverse $\wedge$-semigroup, and $S$ will denote an inverse subsemigroup of $A$.}

\section{A decomposition for semigroups with zero}
The first theorem is simple, but is included mainly for contrast with the classical case, where it is a hidden corollary of the main theorem.  
We begin with a construction, well-known in general semigroup theory\footnote{For instance, it occurs in Vol. II of \cite{C&P}, p.13 as the \emph{$0$-direct union}.}. 
\begin{defs} \label{0ds}
\begin{enumerate}
\item A semigroup $T = T^{0}$ is the \emph{$0$-direct sum} of $0$-disjoint semigroups $T_{\alpha}$ if $T\smallsetminus \{0\} = \bigcup_{\alpha}\{(T_{\alpha}\smallsetminus\{0\})\}$ and for $t\in T_{\alpha}$ and $u\in T_{\beta}$, $T$ has the product \[
t\cdot u = \left\{\begin{array}{ccc}tu & \text{if} & \alpha= \beta ,\\0 & \text{if} & \alpha\neq \beta~.\end{array}\right.
\]
\item Each subsemigroup $T_{\alpha}$ is called a \emph{summand} of $T$, and a semigroup $T$ is \emph{irreducible} if it cannot be written as a $0$-direct sum having more than one non-trivial summand.
\end{enumerate}
\end{defs}
The $0$-direct sum is thus the limit of $0$-preserving injective maps $T^{0}_{\alpha} \hookrightarrow T$ and is written $\sum^{0}T_{\alpha}$. If each $T_{\alpha}$ is inverse, so also is $T$. When $T_{\alpha} = T_{\alpha}^{0}$ for all $\alpha$, as holds for inverse $\wedge$-semigroups, we may write simply $\sum T_{\alpha}$.  Note that the definition implies that when $\alpha\neq \beta$, $T_{\alpha}T_{\beta} = \{0\}$. 

\begin{proposition} \label{sigma}
 Let $T_{\alpha}\leq A$ for $\alpha\in I$.  The map $\sigma\colon t\mapsto \otimes u_{\alpha}$  of $\sum^{0}T_{\alpha}$ to $\prod T_{\alpha}^{0}$, where \[
 u_{\alpha} = \left\{\begin{array}{ccc} t & \text{if} & t\in  T_{\alpha}, \\0 & \text{otherwise,} & \end{array}\right. \]
  is a $0$-preserving injective morphism.
\end{proposition}
\begin{proof}
By definition $0\sigma = 0$.  For injectivity, if $t\in \sum^{0}T_{\alpha}$, say $t \in T_{\beta}$, then the set of entries in $t\sigma$ is
$\{0, t\}$, so $\sup _{\alpha}\{t\sigma\} = t$.   
Suppose $s\in T_{\alpha}$ and $t\in T_{\beta}$.  By definition,  $(s\sigma)( t\sigma) =  \otimes u_{\gamma}$ where   \[
u_{\gamma} = \left\{\begin{array}{ccc} st & \text{if} & \alpha = \beta = \gamma ,\\0 & \text{otherwise.} & \end{array}\right.
\]
Thus whether  $\alpha = \beta$ or $\alpha\neq \beta$ (when $T_{\alpha}T_{\beta} = \{0\}$)  we have $\otimes u_{\gamma} = (st)\sigma$ and $\sigma$ is a homomorphism. 
\end{proof}

\begin{defs} \label{e}
Let $S$ be a subsemigroup of $A$. We shall say that 
\begin{enumerate}
\item $S$ is \emph{weakly effective} if the only local algebra
containing $S$ is $A$ itself, i.e., $S\leq eAe$ implies $e=1_{A}$  ($s=se=es$ for all $s \in S$  $\Rightarrow e=1_{A}$);
\item  $S$  is \emph{(strongly) effective} if 
there is no $p\in P$ such that $ps=0$ for all $s\in S$.
 \end{enumerate}
Moreover, we shall say a representation $\rho\colon T \rightarrow A$ has either of these properties if its image $T\rho$ has the  corresponding property as a subsemigroup.  
\end{defs}

These properties are equivalent in the classical case, as we discuss in the Appendix, but our first theorem requires the weaker form in general.  For brevity, we shall say that a map $\rho\colon T\rightarrow A$ is a \emph{$0$-representation} if it is a representation such that $0_{T}\rho = 0_{A}$.  (It is not excessively restrictive to consider these, since $\{x\in A\colon x\ge 0_{T}\rho\}$ is a subalgebra of $A$ containing $T\rho$.)
\begin{theorem}\label{th0}
Every $0$-representation  $\rho\colon T\rightarrow A$ of an inverse semigroup $T$ with zero in a unital complete atomistic inverse $\wedge$-semigroup $A$ is a $0$-direct sum of irreducible weakly effective $0$-representations in local algebras of $A$.
\end{theorem}
\begin{proof}
 We begin by writing $S = T\rho$ and 
defining a relation $\mathcal{N}\subseteq P\times S$ as follows:
 \[ (p,s) \in \mathcal{N} \iff ps\neq 0 \text{  or  }  ps^{-1}\neq 0. \]  
 (We may write just $0$ without confusion, since $0_{S} = 0_{A}$ by hypothesis.)  The domain of $\mathcal{N}$ is $Q = \{p\in P\colon ps\neq 0 \text{  for some  } s\in S \} $ and its range is $S^{\ast} = S\setminus\{0\}$.  Note that for $p\in P$, $ps\neq 0$ is equivalent to $p=pss^{-1}$.  Obviously $\mathcal{N}\circ \mathcal{N}^{-1}$ and $\mathcal{N}^{-1}\circ \mathcal{N}$ are symmetric relations, and reflexive on their respective domains $Q$ and $S^{\ast}$.  Next we define their transitive closures,
\begin{align*}
  \mathcal{U} = (\mathcal{N}\circ \mathcal{N}^{-1})^{T} =  \bigcup_{n\in \mathbb{N}}(\mathcal{N}\circ \mathcal{N}^{-1})^{n}  \text{   and   } \\
  \mathcal{K} = (\mathcal{N}^{-1}\circ \mathcal{N})^{T} =  \bigcup_{n\in \mathbb{N}}(\mathcal{N}^{-1}\circ \mathcal{N})^{n} .
\end{align*}
These are equivalence relations on $Q$ and $S^{\ast}$ respectively, and  
it follows from the definition that
$\mathcal{U}\circ\mathcal{N} = \mathcal{N}\circ\mathcal{K}$.  
Thus  the assignment $p\mathcal{U}\mapsto (p\mathcal{N})\mathcal{K}$ is well-defined and has inverse $s\mathcal{K}\mapsto (s\mathcal{N}^{-1})\mathcal{U}$.   
 $\mathcal{U}$- and $\mathcal{K}$- classes (denoted $Q_{\alpha}$ and $S_{\alpha}$, say) have a common indexing such that $p\in Q_{\alpha}$ if and only if there is $s\in S_{\alpha}$ such that $ps \neq 0$ or $sp \neq 0$, and $s\in S_{\alpha}$ if and only if there is $q\in Q_{\alpha}$ such that $qs\neq 0$.  
 
We shall show that, for each $\alpha$, $S_{\alpha}^{0} = S_{\alpha}\cup\{0\}$ is an ideal of $S$.  To this end, suppose $s\in S_{\alpha}$ and $t\in S$.  If $st\neq 0$, there is an atom $c$ such that $c\leq st$.  Put $p = cc^{-1}$; then  $c =cc^{-1}st$, 
and we have $pst, ps \neq 0$. Therefore $s\,\mathcal{N}^{-1}p\,\mathcal{N}st$ and 
$(s,st)\in \mathcal{N}^{-1}\circ \mathcal{N}\subseteq \mathcal{K}$, whence $st\in S_{\alpha}$.  In any case, $st\in S_{\alpha}^{0}$; similarly, $ts\in S_{\alpha}^{0}$ and  $S_{\alpha}^{0}$ is an ideal of $S$.  

It follows that if $\alpha \neq \beta$, $S_{\alpha}^{0}S_{\beta}^{0} \subseteq S_{\alpha}\cup S_{\beta}\cup\{0\}$, and so  
$S = \sum^{0}S_{\alpha}^{0}$.  

Now set $e_{\alpha} = \sup Q_{\alpha}$ and $A_{\alpha} = e_{\alpha}Ae_{\alpha}$.  Take $s\in S_{\alpha}$ and let $a$ be an atom of $A$ such that $a\leq s$.  With $p$ written for $aa^{-1}$, we have $p\leq ss^{-1}$ and so $ps\neq 0$, whence   $p\in Q_{\alpha}$ and $a=pa\leq e_{\alpha}s$.  Similarly $s = se_{\alpha}$ and moreover $S_{\alpha}^{0}\leq A_{\alpha}$.
    
Suppose that $S_{\alpha} = T_{1}\oplus T_{2}$, for non-trivial subsemigroups $T_{1}$ and $T_{2}$ such that $T_{1}T_{2} = \{0\}$.  If there are $s'\in T_{1}, t'\in T_{2}$ with $(s',t')\in \mathcal{K}$, there must be some $s\in T_{1}, t\in T_{2}$ and $p\in P$ such that $p\leq ss^{-1}$ and $p\leq tt^{-1}$.  But then $p\leq ss^{-1}tt^{-1} = 0$, a contradiction, showing that $S_{\alpha}$ is irreducible.  
If $p\in Q_{\alpha}$, then there is $s\in S_{\alpha}$ such that $p=pss^{-1}\leq ss^{-1}$.  If also $s\in eAe$ for some $e=e^{2}\in A_{\alpha}$, then $s=es$ and so $ss^{-1}\leq e\leq e_{\alpha}$, whence $p\leq e$.  It follows that $e_{\alpha}=\sup Q_{\alpha}\leq e\leq e_{\alpha}$ and so $S_{\alpha}$ is weakly effective in $A_{\alpha}$.  
This establishes the Theorem. 
\end{proof}
Since the structure of irreducible subsemigroups can be quite general, we appear to also need the finer partitioning of the atomic idempotents of $A$ which is provided by the transitivity relation of the next definition. 
\section{The orbit equivalence}
\begin{definition} \label{tee}
Define a relation $\mathcal{T}=\mathcal{T}_{S}$ on the set $P$ as follows: for
$p,q\in P,$ $p\mathcal{T}_{S}q$ if there exists $s\in S$ such that
$q=s^{-1}ps$ (or any of the equivalents in Lemma \ref{tfae}).
\end{definition}
Note that $s(s^{-1}ps) = ps$ and that $p\leq ss^{-1}$ iff $ps\neq0$. Thus $\mathcal{T}$ is symmetric ($psq\neq0$ implies $qs^{-1}p\neq0$) and transitive 
($p=s^{-1}qs$ and $q=t^{-1}rt$ imply $p=(ts)^{-1}rts$).  
So in general, $\mathcal{T}$ is only a partial equivalence, that is,
an equivalence on its domain  ${\rm dom}\mathcal{T} = 
\{p\in P\colon ps\neq 0 \text{ for some }s\in S\} = {\rm dom}\,\mathcal{U} = Q$.  It is also easy to see that $\mathcal{T}\subseteq \mathcal{U}$.  

\begin{remark}
The atoms of $A$ which lie beneath some $s\in S$ are precisely the elements of the form $a = psq$ such that $psq\neq 0$ (and then $p,q\in \rm{dom}\mathcal{T}$).  As $s$ ranges over $S$, these elements form a groupoid,  the connected components of which are the $\mathcal{T}$-classes. 
\end{remark}
 \begin{defs}
 Let the $\mathcal{T }$-classes into which $\rm{dom}\mathcal{T}$ is partitioned be indexed by the set $I$, and denoted by $\{P_{i}\colon i\in I\}$.  Define (for each $i\in I$) the idempotent $e_{i} = \sup P_{i}$ and the local algebra $A_{i} = e_{i}Ae_{i}$.
\end{defs}
It is evident that $\mathcal{T}_{S} \subseteq \mathscr{D}^{A}$ for any subsemigroup $S$,
 so a $\mathcal{T}_{S}$-class $P_{i}$ must be contained within a single $\mathscr{D}$-class of $A$.  
 Before leaving this section we record a useful result.
\begin{lemma}\label{ps}
Let $P'\subseteq P$ and $s\in S$.  Then 
$$\sup \{ps\colon p\in P'\} = (\sup\{p\colon p\in P', ps\neq 0\})s.$$ 
\end{lemma}
\begin{proof}
 It is elementary to prove that, if $X, Y$ be subsets of a complete semilattice such that $X\cup Y$ is bounded above, 
$\sup(X\cup Y) = (\sup X)\vee  (\sup Y)$.  
  Then, 
observing $\{ps\colon p\in P'\} = \{ps\colon p\in P', ps\neq 0\}\cup \{ps\colon p\in P', ps = 0\}$, we have  
\begin{align*} 
\sup\{ps\colon p\in P'\} &= \sup\{ps\colon p\in P', ps\neq 0\}\vee \sup\{ps\colon p\in P', ps = 0\}\\ &= \sup\{ps\colon p\in P', ps\neq 0\}. 
\end{align*}
Now  $ \{ps\colon p\in P'\}$ has $s$ as an upper bound, and by Proposition \ref{ehr} and Lemma \ref{tfae}, 
\begin{align*} 
 \sup \{ps\colon p\in P'\} &= (\sup\{ps(ps)^{-1}\colon p\in P', ps\neq 0\})s
&= (\sup\{p\colon p\in P', ps\neq 0\})s .
\end{align*}
\end{proof}
 
\section{Transitivity; bounded sums}
In the classical theory, effectiveness (Definition \ref{e}) and transitivity (below) are key properties of representations and of subsemigroups. 
\begin{defs} \label{t}
Let $S$ be a subsemigroup of $A$. We shall say that 
\begin{enumerate}
\item $S$ is \emph{weakly transitive} if
$\mathcal{T}_{S}$ has just one class, that is, for each pair $p,q\in P$ such
that $pS\not =\left\{  0\right\}  $ and $qS\not \neq \left\{  0\right\}  ,~$
$p=s^{-1}qs$ for some $s\in S$;
\item $S$ is \emph{(strongly) transitive} in
$A$ if $\mathcal{T}_{S}$ is the universal relation on $P$, i.e., for all $p,q\in P$, there is some $s\in S$ such that $psq\neq 0$.
 \end{enumerate}
As before, we  say a representation $\rho\colon T \rightarrow A$ has either of these properties if its image $T\rho$ has the  corresponding property as a subsemigroup.  
\end{defs}
If $S$ is weakly transitive, it is irreducible (Definition \ref{0ds}(2)); for $psq\neq 0$ implies $ps,sq\neq 0$ so $(p,q)\in \mathcal{U}$.
Strong transitivity has implications for the structure of $A$:
$S$ is  transitive if, and only if, for each pair $p,q\in P$ there
exists $a\in A$ such that $p=a^{-1}a,\,q=aa^{-1}$, and $a\leq s$ for some
$s\in S$; that is, the $\mathscr{H}$-class $R_{p}\cap L_{q}$ contains an
element beneath some element of $S$. In particular, all atoms of $A$ then form one
$\mathscr{D}$-class.

The Appendix to the present paper contains a discussion of the rationale for the choices of the generalisations made here, and includes small examples.
The reader will note that each generalised property possesses a weak and a strong version; after making the definitions above, we shall generally suppress the modifier `strong', etc.  
This allows a simplified terminology, explained in the following Lemma.
\begin{lemma} \label{wis}
 If the subsemigroup $S$ is transitive then it is effective; if it is effective and  weakly transitive, then it is transitive.  
\end{lemma}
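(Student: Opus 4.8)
The plan is to unwind the conditions of Definition \ref{t&e}, using the fact (recorded just after Definition \ref{tee}) that $\mathcal{T}_S$ is always a partial equivalence, i.e.\ an equivalence relation on its domain ${\rm dom}\,\mathcal{T}=\{p\in P\colon ps\neq 0\text{ for some }s\in S\}$. Both implications are then short deductions relating the universality, totality, and single-class conditions on $\mathcal{T}_S$, with Lemma \ref{tfae} supplying the interchange of the equivalent formulations ($q=s^{-1}ps$, $psq\neq 0$, etc.).

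For the first implication, suppose $S$ is transitive, so $\mathcal{T}_S$ is the universal relation on $P$. I would observe that universality includes every diagonal pair: for each $p\in P$ we have $p\,\mathcal{T}_S\,p$, hence some $s\in S$ with $p=s^{-1}ps$, equivalently $ps\neq 0$ by Lemma \ref{tfae}. Thus every $p\in P$ lies in ${\rm dom}\,\mathcal{T}$, which is exactly the assertion that $\mathcal{T}_S$ is total, i.e.\ that $S$ is effective. For the second implication, suppose $S$ is effective and weakly transitive. Effectiveness gives ${\rm dom}\,\mathcal{T}=P$, while weak transitivity says $\mathcal{T}_S$ has a single class on its domain. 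Combining these, the unique $\mathcal{T}_S$-class is all of $P$; since $\mathcal{T}_S$ restricts to an equivalence on its domain, any two members of a common class are related, so $p\,\mathcal{T}_S\,q$ for all $p,q\in P$. This is precisely universality of $\mathcal{T}_S$, that is, transitivity of $S$.

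The only point needing care—and the closest thing to an obstacle—is the interaction between the totality condition (phrased as ${\rm dom}\,\mathcal{T}=P$) and the reflexivity that is silently packaged into the universal relation. In the first part one must note explicitly that universality forces $p\,\mathcal{T}_S\,p$ and hence membership of $p$ in the domain; in the second, that the one-class hypothesis controls only ${\rm dom}\,\mathcal{T}$, so it is effectiveness that is needed to spread the single class across the whole of $P$. Once this is made explicit, everything else is formal manipulation of the equivalent conditions in Lemma \ref{tfae}.
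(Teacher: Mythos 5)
Your proof is correct and takes essentially the same approach as the paper's: both implications are obtained by unwinding Definitions \ref{t&e} via Lemma \ref{tfae}, showing first that universality of $\mathcal{T}_{S}$ forces every atom into ${\rm dom}\,\mathcal{T}_{S}$ (effectiveness), and then that effectiveness plus the single-class condition of weak transitivity makes $\mathcal{T}_{S}$ universal. The only difference is cosmetic: you invoke the diagonal pair $p\,\mathcal{T}_{S}\,p$ to get $ps\neq 0$, where the paper applies the hypothesis to an arbitrary pair.
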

\begin{proof}
If $S$ is transitive, and $p \in P(A)$, then there is $s \in S$ with $sp \neq 0$ and $S$ is effective.
Let $ S $ be effective, and $p,q \in P(A)$.  
Then there exist $s,t \in S$ such that $sp,tq\neq 0$, and in turn this means that $p,q \in \rm{dom}(\mathcal{T}_{S})$. With weak transitivity this implies $p \mathcal{T}_{S} q$.  
\end{proof}
Thus we may use the modifier `weakly' to refer to both attributes in conjunction, so `weakly effective and transitive' is to be read as `weakly effective and weakly transitive'.
 
Analysis of the classical case suggests the need for yet another construction.
\begin{defs}\label{otimes}
Consider a collection of semigroups $T_{i}$ indexed by $i\in I$ and having the product $\prod \{T_{i}\colon i\in I\}$, or briefly $\prod T_{i}$.
\begin{enumerate}[label=({\roman*})]
\item
 We write $\otimes x_{i}$ to denote the `sequence'  $(x_{i})_{i\in I}$, i.e., the member of \,$\prod T_{i}$ such that $(\otimes x_{i})pr_{j} = x_{j}\in T_{j}$.  
\item
Given maps  $\psi_{i}\colon U\rightarrow T_{i}$, the unique map $U \rightarrow \prod{T_{i}}$ provided by the limit property is called the \emph{product} of the maps $\psi_{i}$ and denoted by $\otimes\psi_{i}$.  It satisfies $s(\otimes\psi_{i}) = \otimes (s\psi_{i})$ and is a homomorphism if each $\psi_{i}$ is.  
\item
 Now suppose that each $T_{i}\leq A$.  An element $t = \otimes t_{i}$ of \;$\prod T_{i}\leq A^{I}$ will be called \emph{bounded} if the set $\{t_{i}\}$ is bounded above in $A$.  The set of all bounded elements of $\prod T_{i}$ will be denoted $B = \mathscr{B}(\prod T_{i})$.
\end {enumerate}
\end{defs}

Let us next observe that $B$ thus defined is the (maximum) domain of the partial function 
$\sup\colon \otimes t_{i}\mapsto \sup\{t_{i}\}$, and  is a subsemigroup of $\prod T_{i}$.  For if $s=\otimes s_{i}, t=\otimes t_{i} \in B$, then for all $i\in I$ there hold $s_{i}\leq u, t_{i}\leq v$ for some $u,v\in A$, and so $s_{i}t_{i}\leq uv\in A$, whence $st\in B$; and also $s_{i}^{-1} \leq u^{-1}$, whence $s^{-1}\in B$.  Indeed $B$ is an inverse $\wedge$-semigroup in its own right.
\begin{defs} \label{Sprod}
\begin{enumerate}[label=({\roman*})]
\item Define $\omega\colon \mathscr{B}(\prod T_{i})\rightarrow A$ by 
$(\otimes t_{i})\omega = \sup\{t_{i}\}$ (useful when we wish to write $\sup$ as a right mapping).
\item
Any subsemigroup of $B$ will be called a \emph{bounded} subsemigroup of $\prod T_{i}$, and a \emph{Schein sum} if $\omega$ is a homomorphism.  
\item
Given maps  $\psi_{i}\colon S\rightarrow T_{i}$, the map  $\otimes\psi_{i}\colon S \rightarrow \prod{T_{i}}$ will be called a \emph{bounded product} if its image is bounded, and a \emph{Schein sum} if its image is Schein.  
\item
The representations $\phi \colon S \rightarrow A_{1}$ and $\psi\colon S \rightarrow A_{2}$ are \emph{[weakly] equivalent}
if there is an isomorphism $\theta\colon A_{1}\rightarrow A_{2}$ \emph{ [$\theta\colon S\phi\rightarrow S\psi $]} such that $\phi\theta= \psi$.
\end{enumerate}
\end{defs}
It should be noted that the $0$-direct sum of Theorem \ref{th0} is a Schein sum by Proposition \ref{sigma}.  

\section{Representation by Schein sums} \label{S}
With the apparatus of the previous section at hand, we may formulate the second theorem.
\begin{theorem}\label{th1}
 Every 
 representation of an inverse semigroup $T$ in a complete atomistic inverse $\wedge$-semigroup $A$ is weakly equivalent to a Schein sum of weakly 
 effective representations in local algebras of $A$. 
\end{theorem}
\begin{proof} Let $S = T\rho$.
Fix an element $s \in S$, and for each $i\in I$, set 
$s_{i} = \sup\{ps\colon p\in P_{i}\}$.
By Lemma \ref{ps}, $s_{i} =  (\sup\{p\colon p\in P_{i}, ps\neq 0\})s = e_{i}s$, and dually $s_{i} = se_{i}$.
It follows that $s\phi_{i} = s_{i}$ defines a map $\phi_{i} \colon S \rightarrow e_{i}Ae_{i}$, with $A_{i} = e_{i}Ae_{i}$ being a local  algebra of $A$.  
In fact, $\phi_{i}$ is a homomorphism, since 
$(st)\phi_{i} = e_{i}(st)e_{i} = (e_{i}s)(te_{i}) = (s\phi_{i})(t\phi_{i}).$ We denote the subsemigroup $S\phi_{i}$ by $S_{i}$.

Next we show that for each $s \in S$, $s = \sup\{s\phi_{i}\colon i\in I\} $.
 Clearly $ \sup\{s\phi_{i}\colon i\in I\} \leq s$.  For the reverse inequality, let $a\in A$ be an atom such that $a\leq s$ and let $q= aa^{-1}$.  Then $a = qs$  and $q \in \rm{dom}\mathcal{T}$, so there is $i \in I$ such that $q \in P_{i}$.  Thus $a\leq e_{i} s =s\phi_{i} \leq \sup \{s\phi_{i}\colon i\in I\}$, so $s \leq \sup\{s\phi_{i}\colon i\in I\} $ and equality follows. 
 
Then with  $\otimes \phi_{i}$ as defined in Definitions \ref{otimes}, $s(\otimes\! \phi_{i})\omega = \sup\{s_{i}\} = \sup\{se_{i}\} = s$ for each $s\in S$. Thus $\omega$ is an injective morphism 
and   $\rho\otimes\phi_{i}$ is a Schein sum map by Definition \ref{Sprod}.  Moreover, $\rho\otimes\phi_{i}$ is weakly equivalent to $\rho$.
  
 Let $S_{i}\subseteq eA_{i}e$ and $p\in P_{i}$.  Then there is $s$ with $es=s=se$ such that $p = pss^{-1}$, and $ss^{-1}\leq e$.  Thus $p\leq e$ and  $e_{i}\leq e$ follows. So $S_{i}$ is weakly effective  by Definition \ref{e}(1)  and the proof is complete.
\end{proof}
Note our lazy re-use of the $S_{i}$ symbol: these need not be the same as the $S_{\alpha}$ of Theorem \ref{th0} (indeed several $i$ may correspond to a single $\alpha$, since $\mathcal{T}\subseteq\mathcal{U}$).  Despite the finer partitioning of $P$ here, the method of Theorem \ref{th0} may offer an advantage in allowing the partitioning of $S$. 
Also standing in contrast with Theorem \ref{th0}, the decomposition $\rho\otimes\phi_{i}$ may be quite distinct from $\rho$, even when $\rho$ is an embedding.  See Example (1) of the Appendix for a simple illustration of these points.

Refinements of Theorem \ref{th1} may be obtained when the original representation has additional properties, which we proceed to discuss.
 
\section{Dispersed representations}
Here we introduce a condition which ensures the components are at least weakly transitive. 
\begin{lemma} \label{Tres1}
 For each $i$, $\mathcal{T}_{S_{i}} = \mathcal{T}_{S}\cap (A_{i}\times A_{i})$.
 \end{lemma}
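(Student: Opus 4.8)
The plan is to show that the two relations coincide as subsets of $P(A_i)\times P(A_i)$, once one has pinned down the common set on which each lives. I would first check that the atoms of the local algebra $A_i=e_iAe_i$ are exactly the atoms of $A$ lying below $e_i$: since $E(A_i)=\{f\in E(A)\colon f\leq e_i\}$ and the bottom element of $A_i$ is the bottom element $0$ of $A$ (as $e_i0e_i=0$), a minimal nonzero idempotent of $A_i$ is precisely a minimal nonzero idempotent of $A$ that satisfies $f\leq e_i$. Hence $P(A_i)=\{p\in P\colon p\leq e_i\}=P\cap A_i$, so that $\mathcal{T}_S\cap(A_i\times A_i)$ is just $\mathcal{T}_S$ restricted to $P(A_i)\times P(A_i)$, which is exactly the set on which $\mathcal{T}_{S_i}$ is defined. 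I would also record that $S_i$ genuinely is an inverse subsemigroup of $A_i$ (so that Definition \ref{tee} applies to it): from Lemma \ref{si} one gets $s_it_i=(e_is)(te_i)=e_iste_i=(st)_i$ and $(s_i)^{-1}=(s^{-1})_i$, so $\phi_i$ is a homomorphism.

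The computational heart of the argument is the identity
\[ p\,s_i\,q = p\,s\,q \qquad (p,q\in P(A_i),\ s\in S). \]
Indeed, $p\leq e_i$ gives $pe_i=p$ and $q\leq e_i$ gives $e_iq=q$, so that, using $s_i=e_ise_i$ from Lemma \ref{si}, we obtain $p\,s_i\,q=p(e_ise_i)q=(pe_i)\,s\,(e_iq)=psq$. This is the step that clears the only real obstacle, namely that an element $s_i\in S_i$ is in general \emph{not} itself a member of $S$, so that a priori there is no reason for $\mathcal{T}_{S_i}$ and $\mathcal{T}_S$ to be related at all; the identity shows that, as far as the atoms beneath $e_i$ are concerned, multiplying by $s_i$ and by $s$ have the same effect.

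Finally I would assemble the equivalence. For $p,q\in P(A_i)$, applying the characterisation (1)$\Leftrightarrow$(4) of Lemma \ref{tfae} inside the inverse algebra $A_i$ to the pair $S_i\leq A_i$ gives $p\,\mathcal{T}_{S_i}\,q$ iff $p\,s_i\,q\neq 0$ for some $s_i\in S_i$, that is, for some $s\in S$. By the displayed identity this holds iff $psq\neq 0$ for some $s\in S$, and applying Lemma \ref{tfae} once more, now in $A$ with $S\leq A$, this is exactly $p\,\mathcal{T}_S\,q$. Since $p,q\in A_i$, the latter is precisely the assertion $(p,q)\in\mathcal{T}_S\cap(A_i\times A_i)$, which finishes the proof. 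The argument is purely multiplicative and, notably, requires neither completeness nor atomisticity of $A_i$ beyond what was already used to form $e_i$ and $s_i$; in particular it does not depend on whether $P(A_i)$ equals the class $P_i$.
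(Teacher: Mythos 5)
Your proof is correct and follows essentially the same route as the paper's: both arguments reduce the claim to the absorption identity $p\,s_{i}\,q = p(e_{i}se_{i})q = psq$ for atoms $p,q\leq e_{i}$ (via Lemma \ref{si}) and then translate back and forth using the equivalences of Lemma \ref{tfae}. The only cosmetic differences are that you work with criterion (4) ($psq\neq 0$) where the paper works with criterion (2) ($ps=sq\neq 0$), and that you pin down the domain upfront by identifying $P(A_{i})=P\cap A_{i}$, where the paper instead derives $p=pe_{i}$ and $q=e_{i}q$ from atomicity in the course of the forward inclusion.
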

\begin{proof}
If $(p,q)\in \mathcal{T}_{S_{i}}$, there is $s_{i}=se_{i}$ such that $ps_{i} = pe_{i}s = se_{i}q = s_{i}q \neq 0$. Since $0\neq pe_{i}\leq p$, $pe_{i}=p$ and $e_{i}q = q$ similarly.  Thus $p, q \in A_{i}$; moreover, substitution shows $ps = sq\neq 0$, whence        $(p,q)\in \mathcal{T}_{S}\cap (A_{i}\times A_{i})$.
 For the reverse inclusion,  $(p,q)\in \mathcal{T}_{S}$ and $p,q \in A_{i}$ imply $p=pe_{i}, q=e_{i}q$, and $ps=sq\neq 0$. Thus $pe_{i}s=e_{i}sq\neq 0$, ie., $ps_{i}=s_{i}q\neq 0$ with $s_{i}\in S_{i}$.
\end{proof}

\begin{lemma}\label{refine}
 If $A_{i}\cap P_{j}\neq \varnothing$ then $P_{j}\subseteq A_{i}$.
\end{lemma}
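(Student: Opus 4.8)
The plan is to translate the hypothesis into an order statement and then transport it along the connecting elements supplied by $\mathcal{T}_S$. First I would record what membership in a local algebra means for an idempotent: since idempotents commute, an atom $p$ lies in $A_i = e_iAe_i$ if and only if $p = e_ipe_i = e_ip$, that is, if and only if $p\leq e_i$. Hence the hypothesis $A_i\cap P_j\neq\varnothing$ furnishes an atom $p\in P_j$ with $p\leq e_i$, and the goal reduces to showing that every $q\in P_j$ likewise satisfies $q\leq e_i$.

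Next I would fix an arbitrary $q\in P_j$. Since $P_j$ is a single $\mathcal{T}_S$-class, $p\,\mathcal{T}_S\,q$, so by Lemma \ref{tfae} there is $s\in S$ with $ps = sq\neq 0$ and $q = s^{-1}ps = (ps)^{-1}(ps)$. The crux is then a one-line computation exploiting the commutation property $e_is = se_i$ from Lemma \ref{si} together with $pe_i = p$: one finds $ps = (pe_i)s = p(e_is) = p(se_i) = (ps)e_i$. Multiplying on the left by $(ps)^{-1}$ gives $q = (ps)^{-1}(ps) = (ps)^{-1}(ps)e_i = qe_i$, so $q\leq e_i$ and therefore $q\in A_i$. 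As $q$ was arbitrary, $P_j\subseteq A_i$.

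The point I expect to be the genuine obstacle---or rather, the place where a naive approach fails---is the temptation to argue that $p\leq e_i = \sup P_i$ forces $p\in P_i$ and hence $P_j = P_i$. This is false in general: in an atomistic algebra such as the subspace lattice of a vector space, a join of atoms (lines) dominates many atoms lying outside the original set, so one cannot recover the class $P_i$ from $e_i$ by lattice reasoning alone, and indeed the lemma must permit $j\neq i$. The computation above sidesteps this entirely: it never asks which atoms lie below $e_i$ in the abstract, but instead transports the single relation $p\leq e_i$ across the connecting element $ps$, using the fact---special to the idempotents $e_i = \sup P_i$ arising from $\mathcal{T}_S$-classes---that $e_i$ commutes with every element of $S$.
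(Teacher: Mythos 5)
Your proof is correct and follows essentially the same route as the paper's: both invoke Lemma \ref{tfae} to obtain a connecting element $s$ with $q = s^{-1}ps$, then combine $p = e_{i}pe_{i}$ with the commutation $e_{i}s = se_{i}$ from Lemma \ref{si} to conclude $q = e_{i}qe_{i}$. The paper writes this as the single conjugation computation $q = s^{-1}e_{i}pe_{i}s = s_{i}^{-1}ps_{i} = e_{i}qe_{i}$, and your rearrangement via $ps = (ps)e_{i}$ and $q = (ps)^{-1}(ps)$ is the same calculation in slightly different packaging.
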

\begin{proof}
 Let $p\in A_{i}\cap P_{j}$ and $q\in P_{j}$. Since $(p,q)\in \mathcal{T}$, we have (by Lemma \ref{tfae}) that
 $q= s^{-1}ps = s^{-1}e_{i}pe_{i}s = s_{i}^{-1}ps_{i} = e_{i}qe_{i} . $%
 \end{proof}
\begin{lemma} \label{propsSi} For each $i$,
\begin{enumerate}[label=({\roman*})]
\item $S_{i}$ is  effective in $A_{i}$ if and only if $P\cap A_{i}$ is a union of classes $P_{j}$; 
\item $S_{i}$ is  weakly transitive in $A_{i}$ if and only if $P_{j}\subseteq A_{i}$ implies $i=j$;
\item $S_{i}$ is  transitive in $A_{i}$ if and only if $P\cap A_{i} = P_{i}$. \end{enumerate}
\end{lemma}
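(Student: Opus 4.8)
The plan is to translate each of the four properties, as applied to $S_i$ inside the local algebra $A_i$, into a statement about the partial equivalence $\mathcal{T}_{S_i}$ on the atom set of $A_i$. Since $A$ is atomistic, the set of primitive idempotents of $A_i = e_iAe_i$ is $P(A_i) = P\cap A_i = \{p\in P\colon p\le e_i\}$, of which $P_i$ is a (possibly proper) subset. Recalling Definition \ref{t&e}: weak effectiveness asks that no proper local subalgebra $fA_if$ contain $S_i$; effectiveness asks that $\operatorname{dom}\mathcal{T}_{S_i} = P(A_i)$; weak transitivity asks that $\mathcal{T}_{S_i}$ have a single class; and transitivity asks that $\mathcal{T}_{S_i}$ be the universal relation on $P(A_i)$.

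The single fact that drives (ii)--(iv) is a description of the $\mathcal{T}_{S_i}$-classes. Combining Lemma \ref{Tres1} (which says $\mathcal{T}_{S_i}$ is the restriction of $\mathcal{T}_S$ to $A_i$) with Lemma \ref{refine} (which says any class $P_j$ meeting $A_i$ lies wholly within $A_i$), I would show that the $\mathcal{T}_{S_i}$-classes are exactly the $P_j$ with $P_j\subseteq A_i$, so that $\operatorname{dom}\mathcal{T}_{S_i} = \bigcup\{P_j\colon P_j\subseteq A_i\}$; note $P_i\subseteq A_i$ always, since $e_i = \sup P_i$. The small computation underlying this is that for any atom $p\le e_i$ one has $ps_i = pe_is = ps$ (using Lemma \ref{si} and $pe_i=p$), so $p\in\operatorname{dom}\mathcal{T}_{S_i}$ precisely when $p\in\operatorname{dom}\mathcal{T}_S$.

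With this in hand the four parts fall out. For (i) I would show $\sup\{s_is_i^{-1}\colon s\in S\} = e_i$: each $p\in P_i$ lies in $\operatorname{dom}\mathcal{T}_S$, so $ps = ps_i\neq 0$ for some $s$, giving $p\le s_is_i^{-1}$; taking the supremum over $P_i$ and using $e_i=\sup P_i$ (with each $s_is_i^{-1}\le e_i$) forces equality, and any $f\in E(A_i)$ with $S_i\le fA_if$ then dominates every $s_is_i^{-1}$, hence $f\ge e_i$, so $f=e_i=1_{A_i}$. Part (ii) is immediate from the displayed domain: $S_i$ is effective iff $P\cap A_i = \operatorname{dom}\mathcal{T}_{S_i} = \bigcup\{P_j\colon P_j\subseteq A_i\}$, i.e.\ iff $P\cap A_i$ is a union of classes. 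Part (iii) reads off the class description: $\mathcal{T}_{S_i}$ has a single class iff $P_i$ is the only $P_j$ inside $A_i$, i.e.\ iff $P_j\subseteq A_i$ implies $j=i$. Finally (iv) follows by applying Lemma \ref{wis} to $S_i$ in $A_i$, together with the evident fact that a universal relation has a single class: transitivity is then equivalent to the conjunction of effectiveness and weak transitivity, which by (ii) and (iii) together say exactly $P\cap A_i = P_i$.

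The main obstacle to watch is precisely that $P_i$ may be strictly smaller than $P(A_i) = P\cap A_i$: the local algebra can contain atoms of $A$ that lie in no $\mathcal{T}_S$-class (these defeat effectiveness) or that lie in several classes $P_j\subseteq A_i$ (these defeat weak transitivity). Keeping the partial nature of $\mathcal{T}_{S_i}$ straight---so that ``total'', ``single class'', and ``universal'' are not conflated---is where the care lies; once the class description of the second paragraph is established, the remaining arguments are routine bookkeeping.
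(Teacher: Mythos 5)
Your proposal is correct and follows essentially the same route as the paper: both rest on combining Lemma \ref{Tres1} with Lemma \ref{refine} to identify the $\mathcal{T}_{S_i}$-classes as exactly those $P_j$ contained in $A_i$, prove (i) by the same ``$p\le s_is_i^{-1}\le f$ hence $e_i\le f$'' argument, and then read (ii)--(iv) off the definitions. Your only deviation is cosmetic: for (iv) you route through Lemma \ref{wis} and parts (ii)--(iii), where the paper appeals directly to Definition \ref{t&e}(2); both are immediate given the class description.
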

\begin{proof}
(i) $P\cap A_{i}$ is a union of classes $P_{j}$ if and only if $\mathcal{T}_{S_{i}}$ is total on $P\cap A_{i}$, so the claim follows from  Definition \ref{e}.

(ii, iii)  By Lemma \ref{Tres1}, $\mathcal{T}_{S_{i}}$-classes are precisely of the form $A_{i}\cap P_{j}$, so the statements are equivalent to the respective Definitions \ref{t}.   
\end{proof}
\begin{defs}\label{dispersed}
 A subsemigroup $S$ of $A$ is \emph{dispersed} if $A_{i}\cap P_{j}\neq\varnothing$ implies $i=j$.  A representation $\rho\colon T \rightarrow A$ is \emph{dispersed} if its image $T\rho$ is a dispersed subsemigroup of $A$.
\end{defs}
\begin{theorem}\label{th2}
Every dispersed representation $\rho$ of an inverse semigroup $T$ in a complete atomistic inverse $\wedge$-semigroup $A$ is weakly equivalent to a Schein sum of weakly effective and transitive representations in local algebras of $A$.  
If $\rho$ is effective, each factor is transitive. 
\end{theorem}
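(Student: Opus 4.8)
The plan is to build directly on Theorem \ref{th1}, adding only the transitivity conclusions by feeding the disperseness hypothesis into Lemma \ref{propsSi}. Writing $S = T\rho$, Theorem \ref{th1} already exhibits $\rho$ as weakly equivalent to the Schein sum $\rho\otimes\phi_i \colon T \to \prod A_i$, whose $i$-th factor has image $S_i \leq A_i$ and is weakly effective by Lemma \ref{propsSi}(i). So the only thing left to establish for the first assertion is that each $S_i$ is \emph{also} weakly transitive in $A_i$; the weak equivalence and the Schein-sum structure carry over unchanged from Theorem \ref{th1}, so no further work is needed on those points.

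First I would verify weak transitivity of each factor. By Lemma \ref{propsSi}(iii), $S_i$ is weakly transitive in $A_i$ precisely when $P_j \subseteq A_i$ forces $i = j$. Since each class $P_j$ is nonempty, an inclusion $P_j \subseteq A_i$ gives $A_i \cap P_j = P_j \neq \varnothing$, and disperseness (Definition \ref{disperse}) then yields $i = j$. Thus the dispersal hypothesis supplies exactly the condition of Lemma \ref{propsSi}(iii), so every $S_i$ is weakly transitive; combined with the weak effectiveness from Theorem \ref{th1}, each factor is weakly effective and transitive in the agreed sense.

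For the second assertion I would strengthen the argument using effectiveness. If $\rho$ is effective, then $\mathcal{T}_S$ is total, so every atom of $A$ lies in some class $P_j$, that is, $\bigcup_{j} P_j = P$. By Lemma \ref{propsSi}(iv), transitivity of $S_i$ in $A_i$ is equivalent to $P \cap A_i = P_i$. The inclusion $P_i \subseteq P \cap A_i$ is automatic. Conversely, take $p \in P \cap A_i$; totality places $p$ in some $P_j$, so $A_i \cap P_j$ contains $p$ and is nonempty, and disperseness again gives $i = j$, whence $p \in P_i$. This proves $P \cap A_i = P_i$, so each $S_i$ is transitive, as claimed.

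The content here is light, the heart of the matter having already been carried by Theorem \ref{th1} and Lemma \ref{propsSi}. The one point requiring care---the closest thing to an obstacle---is recognising that disperseness is precisely the hypothesis converting weak effectiveness of the factors into weak transitivity, via the elementary observation that a nonempty $P_j$ lying inside $A_i$ must meet $A_i$; and, in the effective case, that it is the totality of $\mathcal{T}_S$ that lets every atom of $A_i$ be assigned a class, so that disperseness can then be applied to force $P \cap A_i = P_i$.
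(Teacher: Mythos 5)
Your proposal is correct and follows essentially the same route as the paper's own proof: invoke Theorem \ref{th1} for the weakly equivalent Schein sum of weakly effective factors, then use Definition \ref{disperse} with Lemma \ref{propsSi}(iii) for weak transitivity, and Lemma \ref{propsSi}(iv) for transitivity in the effective case. The only difference is that you spell out the details the paper leaves implicit (that a nonempty $P_j\subseteq A_i$ meets $A_i$, and that effectiveness plus disperseness forces $P\cap A_i=P_i$), which is a faithful expansion rather than a different argument.
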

\begin{proof}
Theorem \ref{th1} applies, and in addition, Definition \ref{dispersed} and Lemma \ref{propsSi}(iii) tell us that each factor $S_{i}$ is weakly transitive.  If $\rho$ is effective, Lemma \ref{propsSi}(iv) gives transitivity of $S_{i}$.
\end{proof}
Note we have not claimed even essential uniqueness; item (4) of the Appendix shows why. 
\section{The distributive case}
In the classical case, the local algebras $A_{i}$ are $0$-disjoint and so also the $S_{i}$. The property of $\mathscr{I}_{X}$ which chiefly brings this about is (complete) distributivity.  Complete distributivity of $A$ implies that it is (unital) Boolean as defined in \cite{La12}: indeed $e\in E(A)$ has complement defined by 
$\bar{e} = \sup\{p\in P\colon pe = 0\}$.  
\begin{definition} \label{orthog}
A Schein sum $\otimes \psi_{i}\colon S\rightarrow \prod T_{i}$ is called \emph{orthogonal} if for all $i\neq j$, $(S\psi_{i})(S\psi_{j}) = \{0\}$.
\end{definition}

\begin{theorem}\label{th3}
 Every effective representation of an inverse semigroup $T$ in a completely distributive atomistic inverse $\wedge$-semigroup $A$ is 
 an orthogonal Schein sum of transitive representations in local algebras of $A$, which is unique up to order of the factors.
\end{theorem}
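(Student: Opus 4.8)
The plan is to derive Theorem~\ref{th3} from Theorem~\ref{th2}: under the stronger hypothesis of complete distributivity I would check that Theorem~\ref{th2} applies, and then strengthen its conclusion in three separate ways --- the Schein sum becomes \emph{orthogonal}, the weak equivalence becomes a full equivalence, and the decomposition becomes canonical (unique up to order).

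Write $S = T\rho$. Complete distributivity gives $A_i \cap P = P_i$ for every $i$ by Lemma~\ref{dist}, whence $e_i e_j = 0$ for $i \neq j$ by Lemma~\ref{7.2}, and so $S$ is disperse by Lemma~\ref{oid}. Since $\rho$ is also effective, Theorem~\ref{th2} applies and yields a weak equivalence between $\rho$ and the Schein sum $\otimes\phi_i$, all of whose factors $\phi_i \colon S \to A_i$ are transitive (by the effective case of Theorem~\ref{th2}, equivalently Lemma~\ref{propsSi}(iv), which also records $P \cap A_i = P_i$). To see that this Schein sum is orthogonal, note that $e_i e_j = 0$ is one of the equivalent conditions of Lemma~\ref{props}; the (unlabelled) lemma immediately following Lemma~\ref{props} then gives $S_i S_j = \{0\} = S_j S_i$ for $i \neq j$, which is exactly the defining condition for $\otimes\phi_i$ to be an orthogonal Schein sum.

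The crux is the upgrade from weak to full equivalence, and this is precisely where complete distributivity is indispensable. By Lemma~\ref{omegahom} the supremum map $\omega$ is a homomorphism on the entire bounded product --- not merely on the image of $\otimes\phi_i$, which was all that Theorem~\ref{th2} supplied --- while Corollary~\ref{iso} exhibits $\otimes\phi_i$ and $\omega$ as mutually inverse on that image. I would use orthogonality to argue that $\omega$, restricted to the orthogonal sum, is injective: because the components sit over pairwise orthogonal idempotents $e_i$, their supremum recovers each component via $t_i = e_i(\omega(\otimes t_j))e_i$. Promoting the resulting image-level bijection to an isomorphism of codomains, compatible with $\rho$ in the sense of Definition~\ref{equiv}, is the step I expect to be the main obstacle: one must verify that this map preserves not only the multiplication (already secured by Lemma~\ref{omegahom}) and the inversion, but also the meet $\wedge$, so that it is an isomorphism of inverse \emph{algebras} rather than merely of inverse semigroups.

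Finally I would deduce uniqueness up to order from the intrinsic nature of the blocks. The family $\{P_i\}$ consists of the $\mathcal{T}_S$-classes (Definition~\ref{tee}), which depend on $S$ alone; hence the idempotents $e_i$, the local algebras $A_i$, and the transitive factors $S_i$ are all canonically attached to $S$. Conversely, given any orthogonal Schein sum of $S$ into transitive factors, transitivity of a factor forces its set of atoms to be a single $\mathcal{T}_S$-class (via Lemma~\ref{Tres1} and Lemma~\ref{propsSi}(iv)), orthogonality forces distinct factors to carry distinct classes, and effectiveness forces these classes to exhaust $P$. Thus the factors of any such decomposition are in bijection with $\{P_i\}$ and coincide with the $S_i$ after a relabelling of the index set, which is the asserted uniqueness up to the order of the factors.
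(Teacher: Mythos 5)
Your reduction to Theorem~\ref{th2} and your orthogonality argument coincide with the paper's proof: complete distributivity gives $A_{i}\cap P=P_{i}$ (Lemma~\ref{dist}), hence $e_{i}e_{j}=0$ for $i\neq j$ (Lemma~\ref{7.2}) and disperseness (Lemma~\ref{oid}); effectiveness plus Lemma~\ref{propsSi}(iv) makes each factor transitive; and $e_{i}e_{j}=0$ yields $(S\phi_{i})(S\phi_{j})=\{0\}$, which is orthogonality. The problems lie in the two strengthenings that constitute the actual content of Theorem~\ref{th3} beyond Theorem~\ref{th2}, namely uniqueness and strong equivalence.

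The serious gap is in your uniqueness paragraph. For an arbitrary competing orthogonal Schein sum with transitive factors $U_{j}\leq B_{j}=f_{j}Af_{j}$, you assert that ``transitivity of a factor forces its set of atoms to be a single $\mathcal{T}_{S}$-class (via Lemma~\ref{Tres1} and Lemma~\ref{propsSi}(iv))''. But Lemmas~\ref{Tres1} and \ref{propsSi} are statements about the canonical factors $S_{i}=e_{i}Se_{i}$; they say nothing about the $U_{j}$, which are not given in that form. What is needed, and what your sketch never supplies, is the bridge between $\mathcal{T}_{S}$ and the relations $\mathcal{T}_{U_{j}}$. The cheap direction $\mathcal{T}_{U_{j}}\subseteq\mathcal{T}_{S}$ holds because each $u_{j}\leq s$, but it shows only that $P(B_{j})$ is \emph{contained in} a single class $P_{i}$, not equal to it; equality, your appeal to effectiveness (that every $p\in P$ is an atom of some $B_{j}$), and the identification of the factor maps all require passing from a witness $psq\neq 0$ with $s=\sup\{u_{j}\}$ to a witness $pu_{j_{0}}q\neq 0$ inside a single factor. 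That passage is exactly the computation $0\neq psq=p(\sup\{u_{j}\})q=\sup\{pu_{j}q\}$, i.e.\ complete distributivity---the one hypothesis your uniqueness argument never invokes. It cannot be avoided: the paper notes after Theorem~\ref{th2} (citing Appendix item~(4)) that without distributivity a representation can admit genuinely different Schein sum decompositions. The same computation is also how the paper disposes of strong equivalence, the step you explicitly leave open (``the main obstacle''): it shows $f_{j_{0}}=e_{i_{0}}$, $B_{j_{0}}=A_{i_{0}}$ and $u_{j_{0}}=s\phi_{i_{0}}$, so any competing decomposition has literally the same local algebras and factor maps as the canonical one, and no separate isomorphism of codomains needs to be constructed. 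Your alternative plan---promoting $\omega$ to an isomorphism between $A$ and $\prod A_{i}$---cannot work literally, since $A$ in general contains nonzero elements of $e_{i}Ae_{j}$ ($i\neq j$) that are invisible in the product; so the obstacle you flagged is real, and the paper's route through the uniqueness computation is how it is circumvented.
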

\begin{proof}
We continue using the notation introduced above.  
Suppose that $p\in P_{j}$ and $p\leq e_{i} = \sup P_{i}$.  Then $p = p\sup\{q\colon q\in P_{i}\} = \sup\{pq\colon q\in P_{i}\}$, since $A$ is distributive.
However $pq\neq 0$ if and only if $p = q$ for some $q\in P_{i}$, and it follows that $p\in P_{i}$. Thus $S$ is dispersed and Theorem \ref{th2} applies.  

Suppose $p\in P$ satisfies $p\leq e_{i}e_{j}$.  Then $p\in A_{i}\cap A_{j}\cap P = P_{i}\cap P_{j}$ by hypothesis,   so either $i = j$ or there is no such $p$, and in this case  $e_{i}e_{j} = 0$.  So if $i\neq j$, $S_{i}\cap S_{j} \subseteq A_{i}A_{j} = \{0\}$ and 
the Schein sum is orthogonal (Definition \ref{orthog}).

Since $S$ is dispersed and effective, we have $P\cap A_{i} = P_{i}$ and transitivity of $S_{i}$ follows from Lemma \ref{propsSi}(iii).  
As to essential uniqueness: 
suppose that also $S \leq \mathscr{B}(\prod\{U_{j}\colon j\in J\})$ where (for each $j\in J$) $U_{j}$ is weakly transitive in $B_{j}$, a local algebra of $A$, say $B_{j} = f_{j}Af_{j}$.  Let $(p,q)\in \mathcal{T}_{S}$, so $psq \neq 0$ for some $s \in S$.  We have 
$s = \sup\{u_{j}\colon u_{j}\in U_{j}\}$, so $p(\sup\{u_{j}\})q = \sup\{pu_{j}q\}\neq 0$.  
Thus there is $j_{0}\in J$ such that $pu_{j_{0}}q\neq 0$, whence $(p,q)\in \mathcal{T}_{U_{j_{0}}}\subseteq \mathcal{T}_{S}$.  In particular, $p,q\in P(B_{j_{0}})$.  Conversely, because $U_{j_{0}}$ is transitive in $B_{j_{0}}$, 
$p,q\in P(B_{j_{0}})$ implies $(p,q)\in \mathcal{T}_{U_{j_{0}}}$ and so $p,q \in P_{i_{0}}$ for some (unique) $i_{0}$. 
It follows that $f_{j_{0}} = \sup P(U_{j_{0}}) = e_{i_{0}}$, and in turn that $B_{j_{0}} = A_{i_{0}}$ and that 
$\phi_{i_{0}}\colon s \mapsto f_{j_{0}}sf_{j_{0}} = u_{j_{0}}$.  So the associated representations have the same set of factors, in perhaps different orders.  
\end{proof}
Theorem \ref{th3} specialises to Schein's decomposition result for representations in $\mathscr{I}_{X}$, as expressed in Section VI.4 of Petrich \cite{Pe}.

\begin{section} {Dedication and Acknowledgements}
The paper is dedicated to Vivienne Luke whose support of this and other projects has been more than love requires.  Colleagues gave me opportunities to speak on this topic in various places, albeit without the definitive results above.  Long ago, Jonathan Leech and Victor Maltcev wrote helpful comments on an early draft, and most recently an anonymous referee's wise advice hugely improved the presentation.  It is truly a pleasure to thank them all.
\end{section}

\section{Appendix: Remarks and examples} 
\rm
\small
First, we sketch an elementary proof of the claim (in Section 1) that $\text{deg}S\leq 2^{\text{deg}^{\ast}S}-2$.
We begin by constructing a faithful representation  of $\mathscr{I}^{\ast}_{X}$  in $\mathscr{I}_{2^{X}}$. This is actually a consequence of Theorem 1.5 of \cite{FiLe} applied to the contravariant power set functor on sets, but the concrete details are of interest too.  
Let $\beta \in \mathscr{I}^{\ast}_{X}$, say \[
\beta = \left(\begin{array}{c}\cdots\\ \cdots \end{array}\right| \left. \begin{array}{c}D_{i}\\ R_{i} \end{array} \right| \left.\begin{array}{c}\cdots\\ \cdots \end{array} \right), 
\]
where we use the two-line notation of \cite{FiLe}, Section 2.  There is a corresponding partial permutation $\widehat{\beta}$ of 
subsets of $X$, in which $\rm{dom}\widehat{\beta}$ consists of all unions of blocks of $\beta \beta^{-1} = \{D_{i}\colon i\in I\}$, $\rm{ran}\widehat{\beta}$ consists of all unions of blocks of $\beta^{-1} \beta = \{R_{i}\colon i\in I\}$ and, for any  $J \subseteq I$,
\[
(\bigcup\{D_{j}\colon j\in J\}) \widehat{\beta} = \bigcup\{R_{j}\colon j\in J\}. 
\]
 Clearly $\beta \mapsto \widehat{\beta}$ is injective (consider the action on singleton unions) and calculation shows that $\widehat{\beta_{1}\beta_{2} } = \widehat{\beta_{1}}\widehat{\beta_{2}}$ (noting that $\rm{ran}\widehat{\beta_{1}}\cap\rm{dom}\widehat{\beta_{2}}$ consists precisely of the unions of blocks of the partition which is the partition-join of $\rm{ran}\beta_{1}$ with $\rm{dom}\beta_{2}$, so the respective composites correspond).  

Now this map $\beta \mapsto \widehat{\beta}$  always preserves the empty union and the total union ($X$), and so there is a homomorphism, still injective, of $\mathscr{I}^{\ast}_{X}$ to $\mathscr{I}_{Y}$ where $Y=2^{X}\smallsetminus \{ \varnothing, X \}$.  (That this is actually best possible follows from Schein's work \cite{Sc0, Sc1}, or more elementarily by counting the singletons of $Y$ (idempotent atoms of $\mathscr{I}_{Y}$) required to faithfully represent the maximal subgroups in the bottom $\mathscr{D}$-class of $\mathscr{I}^{\ast}_{X}$.)

Now choose $X$ so that $\text{deg}^{\ast}S = \vert X\vert$, so $S$ embeds in $\mathscr{I}^{\ast}_{X}$ and so in $\mathscr{I}_{Y}$, whence  $\text{deg}S\leq \vert Y\vert = 2^{|X|}-2$ and the claim follows.

The second task in this Appendix is to justify the choices presented in Definitions \ref{e} and  \ref{t}. 
Schein, in the context of the
semigroup $\mathscr{B}_{X}$ of binary relations, says that a subsemigroup $S$
is transitive if, given any $x,y\in X,$ there is $s\in S$ with $\left(
x,y\right)  \in s.$ This definition carries over perfectly well to $\mathscr{I}_{X}$, where the most productive view is to consider the action of $S$ on $X$, which is in one-to-one correspondence with the set of idempotent atoms ($\{(x,x)\colon x\in X\}$)---{\it cf.} Petrich \cite{Pe}, where $S$ is transitive [effective] if the relation of transitivity $\mathcal{T}_{S}$ is universal [has total projections].  
Underlying Definition \ref{t}, then, is  an action $(p,s)\mapsto s^{-1}ps$ on the set $X = P\cup\{0\}$, equivalently a representation of $S$ in the transformation semigroup $\mathscr{T}_{X}$.  It restricts to a \emph{partial} action of $S$ on $P = X\setminus \{0\}$ which gives a representation (not necessarily faithful) of $S$ in $\mathscr{I}_{P}$.  

In the classical case, this action is the defining action.  This is not at all the case in general: equivalent classical conditions bifurcate into weak and strong versions, hence the Definitions \ref{t}.   Still, it seems that one should continue to use the  idempotent atoms in these definitions.  In the case of $\mathscr{I}^{\ast}_{X}$ these are dichotomies in $X$, and so atoms of the partition lattice on $X$ \emph{when it is ordered the right way up}---see Ellerman \cite{Ell, Ell14}.

What about the classical precedent for effectiveness?  
It is easier to first describe ineffective subsemigroups. In
$\mathscr{I}_{X}$, a subsemigroup $S$ is ineffective if (a) there exists a
proper local algebra $\mathscr{I}_{Y}$ containing $S;$ equivalently if (b)
there is at least one idempotent atom $\left\{  \left(  x,x\right)
\right\}  $ such that $x$ is in the domain of no member of $S,$ that is,
$\left\{  \left(  x,x\right)  \right\}S =\varnothing,$ which is the zero of
$\mathscr{I}_{X}.$

Now in the general case, if $A$ is atomistic, (a) implies (b): if (a) holds,
there exist $e\neq1$ with $S\subseteq eAe,$ and $p\in P$ with $p\not \leq e$
(otherwise, $e=\sup P=1$). Thus $pe=0,$ but then $ps=pes=0$ for all $s\in S.$ But the reverse is not true, as we now illustrate. 
\subsection*{Examples}
The following (``non-classical'') examples are chosen to occur in dual symmetric inverse $\wedge$-semigroups $\mathscr{I}^{\ast}_{X}$ of small degree, and we continue to use the two-line notation as before, with the abbreviations 
 $\nabla$ for the zero of $\mathscr{I}^{\ast}_{X}$ (the universal relation or partition on $X$) and $\Delta$ for the identity (the identity relation or partition on $X$).
\begin{enumerate}
\item
Consider a semigroup $T$ which is a $0$-direct sum of a $5$-element aperiodic Brandt semigroup with a $2$-element semilattice. It may be embedded in $\mathscr{I}^{\ast}_{4}$ as the subsemigroup $S = \{\nabla , \delta ,\alpha , \alpha^{-1}, \alpha \alpha^{-1}, \alpha^{-1}\alpha \} = \langle \alpha, \delta\rangle$, where   \[
\alpha  = \left(\begin{array}{c}12 \\13\end{array}\right|\left.\begin{array}{c}34 \\24\end{array}\right) \text{   and   }
\delta   = \left(\begin{array}{c}1 \\1\end{array}\right|\left.\begin{array}{c}234 \\234\end{array}\right) .
\]
The idempotents of $S$ are $\alpha \alpha^{-1} = (12 \vert 34)$, $\alpha^{-1}\alpha = (13\vert 24)$,  and $\delta = (1\vert 234)$  which are all members of $P$.  Checking condition (b) applied to the other members of $P$, note  \[
\ (2\vert 134) \nabla = (2\vert 134)\delta  = (2\vert 134)\alpha  = (2\vert 134)\alpha^{-1} = \nabla , 
\]
so that $(2\vert 134)S_{1} = \{\nabla\}$ and (b) is satisfied, so $S$ is ineffective in 
the sense of (b). But condition (a) above is not satisfied: the only local 
algebra containing $S_{1}$  is $\mathscr{I}^{\ast}_{4}$  itself, because the l.u.b of $\alpha \alpha^{-1}$ and $\alpha^{-1}\alpha$ is $\Delta$.
So $S_{1}$ is weakly effective, but not strongly effective. There are two $\mathcal{T}$-classes or orbits, $P_{1} = \{\alpha \alpha^{-1}, \alpha^{-1}\alpha\} = \{(12\vert 34), (13\vert 24)\}$ and $P_{2} = \{\delta \}  = \{(1\vert 234)\}$.
Thus the local identities $e_{i} = \sup P_{i}$ are \[
e_{1} 
= \alpha\alpha^{-1}\vee\alpha^{-1}\alpha = (12\vert 34)\cap (13\vert 24) = \Delta \hspace{12pt} 
 \text{and} \hspace{12pt}
 e_{2}  = \delta = (1\vert 234);\]
 note that $e_{2}\leq e_{1}$. 
The projection maps $\phi_{i}$ are $\phi_{1} = \text{id}$ and \[
\phi_{2} = \left(\begin{array}{cccccc}\nabla & \delta & \alpha & \alpha^{-1} & \alpha\alpha^{-1} & \alpha^{-1} \alpha\\ \nabla & \delta & \nabla & \nabla & \nabla & \nabla \end{array}\right),  
\]
and their images are $S_{1}=S$ and $S_{2} = \{\delta,\nabla\}\leq S_{1}$;   by Lemma \ref{propsSi}, $S_{1}$ is not weakly transitive and $S_{2}$ is transitive.  The representation $\phi_{1}\otimes\phi_{2}$ has codomain 
$A_{1}\times A_{2} = \mathscr{I}^{\ast}_{4}\times \mathscr{I}^{\ast}_{2}$ and is determined by the images $\alpha (\phi_{1}\otimes\phi_{2}) = (\alpha, \nabla)$ and $\delta (\phi_{1}\otimes\phi_{2}) = (\delta,\delta)$.  
In contrast the representation of $S$ by the method of Theorem \ref{th0} is given by $\alpha\mapsto (\alpha, \nabla)$ and $\delta \mapsto (\nabla, \delta)$.   
\item
We can also embed $T$ in $\mathscr{I}^{\ast}_{4}$ as the subsemigroup 
\[
S = \{\nabla , \delta ,\beta , \beta^{-1}, \beta \beta^{-1}, \beta^{-1}\beta \} , \text{  where }
\beta  = \left(\begin{array}{c}12 \\2\end{array}\right|\left.\begin{array}{c}34 \\134\end{array}\right) \]
and $\delta$ is as before. This time, (a) is satisfied since $S$ is contained in the local algebra whose identity is $(1\vert 2\vert 34)$ and so (b) is satisfied too; $S$ is ineffective on either criterion. In fact $p = (4\vert 123)$ has $pS = \{\nabla\}$. 
\item
But we can modify example (ii) to embed $T$ in $\mathscr{I}^{\ast}_{3}$ by lumping vertices $3, 4$ together to make a 3-element set $\{1,2,3\}$, the 
quotient of $\{1,2,3,4\}$ by the equivalence generated by $(3, 4)$. Thus we consider
$S = \langle\epsilon, \gamma\rangle = \{\nabla , \epsilon, \gamma , \gamma^{-1}, \gamma \gamma^{-1}, \gamma^{-1}\gamma \}$,  where  
\[\gamma  = \left(\begin{array}{c}12 \\2\end{array}\right|\left.\begin{array}{c}3 \\13\end{array}\right) \text{   and   }
\epsilon   = \left(\begin{array}{c}1 \\1\end{array}\right|\left.\begin{array}{c}23 \\23\end{array}\right) .
\]
Now all the  idempotent atoms in $\mathscr{I}^{\ast}_{3}$ occur already in $S$, whence $p \in pS$ for all $p \in P$, and so this $S$ is effective. The two orbits 
are $P_{1} = \{(12\vert 3), (2\vert 13) \}$ and 
$P_{2} = \{(1\vert 23)\}$; the local identities are $e_{1} = (12\vert 3)\cap (2\vert 13) = \Delta$ and $e_{2} = (1\vert 23)$, and the maps as before, $\phi_{1} = \text{id}$ and \[
 s\phi_{2} = \left\{\begin{array}{ccc}\delta , & \text{if} & s = \delta \\\nabla ,& \text{if }& s\neq \delta~ .\end{array}\right.
\]
In each example (1)--(3), the local algebra generated by $P_{1}$ contains the local algebra generated by $P_{2}$; similar examples could be given for subsemigroups in (say) the inverse semigroup of partial automorphisms of a vector space. This contrasts with the classical theory, where the local algebras generated by distinct orbits intersect in $\{\varnothing\}$, the trivial local algebra.  
\item
Last, an  example of a dispersed representation, which incidentally also makes the point that effectiveness is distinct from efficiency (in the sense of a lack of redundancy).  
 
Let $T$ be the $5$-element aperiodic Brandt semigroup generated (as an inverse semigroup with zero) by $a$, subject to the relation $a^{2} = 0$.  It may be embedded in $\mathscr{I}^{\ast}_{5}$ by the map induced by $a \mapsto \alpha = 
 \left(   \begin{array}{c}1 \\2\end{array}\right| \left. \begin{array}{c}4 \\3\end{array}\right|\left. \begin{array}{c}235 \\145\end{array}   \right) $, with $S = \langle \alpha \rangle$.  
 We can calculate the orbits  $P_{1} = \{(1 \vert 2345), (2\vert 1345)\}$, $P_{2} = \{(3\vert 1245), (4\vert 1235)\}$, and $P_{3} = \{(14\vert 235), (23\vert 145)\}$.  From these we have $e_{1} = (1\vert 2\vert 345)$, $e_{2} = (3\vert 4\vert 125)$, and $e_{3} = (14\vert 23\vert5)$.  Note $e_{1}e_{2} = e_{1}e_{3} = e_{2}e_{3} = \nabla$ holds,  
 and $S$ is dispersed (Definition \ref{dispersed}).
 Then 
 $$ \alpha\phi_{1} =   
 \left(   \begin{array}{c}1 \\2\end{array}\right| \left. \begin{array}{c}2345 \\1345\end{array}\right), 
  \alpha\phi_{2} =   
 \left(   \begin{array}{c}4 \\3\end{array}\right| \left. \begin{array}{c}1235 \\1245\end{array}\right), \text{  and }
  \alpha\phi_{3} =   
 \left(   \begin{array}{c}1 4\\2 3\end{array}\right| \left. \begin{array}{c}235 \\145\end{array}\right) .$$
 So $\alpha = \alpha(\phi_{1}\oplus\phi_{2}\oplus\phi_{3})\omega = \alpha(\phi_{1}\oplus\phi_{2})\omega = \alpha(\phi_{1}\oplus\phi_{3})\omega = \alpha(\phi_{2}\oplus\phi_{3})\omega $, and even this dispersed example does not have a unique Schein sum representation. 
 \end{enumerate}

\vspace{0.5cm} 
\end{document}